\documentclass[11pt]{article}
 \usepackage[top=1in,bottom=1in,left=1.5in,right=1.5in]{geometry}
  \usepackage{amsmath,amssymb}
  \usepackage{latexsym}
  \usepackage{setspace}
  \usepackage{tikz}

  \newcommand{\cc}{\mathcal{C}}

  \newcommand{\hs}{\hspace*{\parindent}}

  \newcommand{\qed}{\hspace*{\fill} $\Box$\\}

  \newtheorem{theo}{\bfseries \hs Theorem}[section]

  \newtheorem{lemma}[theo]{\bfseries \hs Lemma}

  \numberwithin{equation}{section} 

 \begin{document}

\title{Almost-rainbow edge-colorings of some small subgraphs}

\author{Elliot Krop\thanks{Department of Mathematics, Clayton State University, (ElliotKrop@clayton.edu)} \and Irina Krop\thanks{Depaul University, (irina.krop@gmail.com)}}
   \date{\today}

 \maketitle

\begin {abstract}
Let $f(n,p,q)$ be the minimum number of colors necessary to color the edges of $K_n$ so that every $K_p$ is at least $q$-colored.  We improve current bounds on these nearly ``anti-Ramsey" numbers, first studied by Erd\H os and Gy\'arf\'as.  We show that $f(n,5,9) \geq \frac{7}{4}n-3$, slightly improving the bound of Axenovich. We make small improvements on bounds of Erd\H os and Gy\'arf\'as by showing $\frac{5}{6}n+1\leq f(n,4,5)$ and for all even $n\not\equiv 1 \pmod 3$, $f(n,4,5)\leq n-1$ . For a complete bipartite graph $G=K_{n,n}$, we show an n-color construction to color the edges of $G$ so that every $C_4\subseteq G$ is colored by at least three colors.  This improves the best known upper bound of M. Axenovich, Z. F\"uredi, and D. Mubayi. 
\\[\baselineskip] 2000 Mathematics Subject
      Classification: 05A15, 05C38, 05C55
\\[\baselineskip]
      Keywords: Ramsey theory, generalized Ramsey theory, rainbow-coloring, edge-coloring, Erd\H os problem
\end {abstract}

 \section{Introduction}
 
 \subsection{Definitions}
 
 For basic graph theoretic notation and definition see Diestel \cite{Diest}.  All graphs $G$ are undirected with the vertex set $V$ and edge set $E$.  We use $\left|G\right|$ for $\left|V\right|$ and $\left\|G\right\|$ for $\left|E\right|$.  $K_n$ denotes the complete graph on $n$ vertices and $K_{n,m}$ the bipartite graph with $n$ vertices and $m$ vertices in the first and second part, respectively.  For any edge $(u,v)$, let $\cc(u,v)$ be the color on that edge, and for any vertex $v$, let $\cc(v)$ be the set of colors on the edges incident to $v$.  We say that an edge-coloring is $proper$ if every pair of incident edges are of different colors. If vertices $u,v$ are adjacent, we write $u \sim v$.

 \subsection{Coloring Edges}
 
 Given a graph $G$ of order $n$ and integers $p,q$ so that $2\leq p \leq n$ and $1 \leq q \leq {p \choose 2}$, call an edge-coloring $(p,q)$ if every $K_p\subseteq K_n$ receives at least $q$ colors on its edges.
 Let $f(n,p,q)$ be the minimum colors in a $(p,q)$ coloring of $K_n$.  This generalization of classical Ramsey functions was first mentioned by P. Erd\H os in $\cite {Erd}$ and later studied by Erd\H os and Gy\'arf\'as in $\cite {ErdGyar}$.  Further, define $\phi(n,p,q)$ to be the minimum colors in a proper $(p,q)$ coloring of $K_n$.
 
 Extending the definition, for any graph $G$, call an edge coloring $(H,q)$ if every subgraph $H \subseteq G$ receives at least q colors on its edges.  Let $f(G,H,q)$ be the minimum colors in an $(H,q)$ coloring of the edges of $G$.  We say that a coloring of $H$ is \emph{almost-rainbow} if $q = \left\|H\right\| - 1$, that is, one color is repeated once.
 
 For an extended survey regarding bounds on rainbow colorings, see \cite{FujMagOze}.
 
 Using the Local Lemma, the authors in $\cite{ErdGyar}$ were able to produce bounds for $f(n,p,q)$, with several difficult cases unresolved.  Among those were $f(n,4,3)$, $f(n,4,4)$, $f(n,4,5)$, and $f(n,5,9)$.  In these cases they showed that $f(n,4,3) \leq c \sqrt{n}$, $c\sqrt{n}\leq f(n,4,4)\leq cn^{\frac{2}{3}}$, $\frac{5n-1}{6}\leq f(n,4,5) \leq n$, and $\frac{4}{3} n \leq f(n,5,9) \leq cn^{\frac{3}{2}}$. The authors further mentioned that in this branch of generalized Ramsey theory, finding the orders of magnitude of $f(n,4,4)$ and $f(n,5,9)$ are ``the most interesting open problems, at least to show that the latter is non-linear".  The authors then stated the linearity of said function as Problem 1.
  
  As for $f(n,4,5)$, the authors showed that $\frac{5(n-1)}{6}\leq f(n,4,5)$ with an upper bound of $n$ for odd $n$ and $n-1$ for even $n$ if $n-1$ is prime.
  
 In $\cite{Mub}$, D.Mubayi showed that \[f(n,4,3) \leq e^{O(\sqrt{log n})}\] and in $\cite{KostMub}$ A.Kostochka and D.Mubayi showed that for some constant c, \[f(n,4,3) \geq \frac{c\log n}{\log \log \log n}.\] J.Fox and B. Sudakov in \cite{FoxSud}, further improved the lower bound to $\frac{\log n}{4000}$.\\ 
 
 As for the other case, in $\cite {Axe}$, M.Axenovich showed that for some constant c, \[\frac{1+\sqrt{5}}{2}n \leq f(n,5,9) \leq 2n^{1+\frac{c}{\sqrt{\log n}}}.\]
 In that same paper, she remarked that G.T\'oth had communicated to her that the lower bound can be improved to $2n-6$, however, the result has remained unpublished for over ten years.
 
 In Section 2, we show
 
 \[f(n,5,9) \geq \frac{7}{4} n - 3\]
 
 \vspace{.2 in}
 
 In Section 3, we make minimal improvements in the work of \cite{ErdGyar}, showing $\frac{5}{6}(n-1)+1\leq f(n,4,5)\leq n-1$ for even $n$ not congruent to one mod three.
 
 \vspace{.2 in}
 
 In $\cite{AxeFurMub}$, the authors showed that $f(K_{n,n},C_4,3) \geq \frac{2}{3}n$, $f(K_{n,n},C_4,3) \leq n$ for odd $n\geq5$, and $f(K_{n,n},C_4,3) \leq n+1$ for even $n\geq5$.
 
 In Section 4, we show
 
 \[f(K_{n,n},C_4,3) \leq n, \mbox{ for all }n\geq3.\]

 We believe that this upper-bound is the best possible.
 
 \section{Almost rainbow five-cliques}
 
 \subsection{The main tool}
 
 Let $f(G)$ be the minimum number of colors needed to color the edges of $G$ so that every path or cycle with four edges is at least three-colored.  
  
 Let $\phi(G)$ be defined as $f(G)$ above, except replace ``color" by ``properly color".
 By arguments from $\cite{Axe}$ it is easy to see that $f(n,5,9) \leq \phi(n,5,9) = \phi(K_n)$.
 

\begin {lemma}\label{maintool}

$\phi(K_{2,n}) = \lceil \frac{3}{2} n \rceil$

\end {lemma}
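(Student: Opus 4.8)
The plan is to establish matching bounds, and I will assume $n\geq 3$ throughout (the range in which $K_{2,3}\subseteq K_{2,n}$). Write the two parts of $K_{2,n}$ as $\{u,v\}$ and $\{w_1,\dots,w_n\}$, and encode a coloring by the pairs $a_i=c(u,w_i)$, $b_i=c(v,w_i)$. Since every $K_{2,3}\subseteq K_{2,n}$ is spanned by $\{u,v\}$ together with a triple $\{w_i,w_j,w_k\}$, the coloring is a $(K_{2,3},5)$-coloring exactly when, for every $3$-subset $\{i,j,k\}$, the multiset $\{a_i,b_i,a_j,b_j,a_k,b_k\}$ contains at least five distinct values.

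For the lower bound, the first step is to observe that no color can appear on three or more edges: if $a_i=a_j=a_k$ then the $K_{2,3}$ over $\{w_i,w_j,w_k\}$ carries at most four colors, the case of three $v$-edges is identical, and the mixed configurations reduce to these. One also checks, in passing, that a vertex with $a_i=b_i$ would force the remaining $2(n-1)$ edge-colors to be pairwise distinct, hence $2n-1\geq\lceil 3n/2\rceil$ colors; so we may assume no such vertex exists. Thus every color class has size $1$ or $2$, and if $N_2$ denotes the number of size-$2$ classes, then counting the $2n$ edges shows the number of colors equals $2n-N_2$. It therefore suffices to prove $N_2\leq\lfloor n/2\rfloor$.

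The crux — and the step I expect to be the main obstacle — is the local claim that whenever one of the two edges at a vertex $w_k$ lies in a size-$2$ color class, the other edge at $w_k$ is a private color. To see this, suppose $a_k$ is repeated, so $a_k=a_m$ or $a_k=b_m$ for some $m\neq k$; for any third index $l$ (available since $n\geq 3$), the five-color condition applied to $\{k,m,l\}$ — whose single allowed repetition is already used up by $a_k$ — forces $b_k$ to differ from $a_l$ and $b_l$ for every such $l$, and also from $a_m$, $b_m$ and $a_k$, so $b_k$ occurs on no other edge. Granting this, no vertex can have both of its edges in size-$2$ classes; consequently each of the $N_2$ size-$2$ colors is incident to two distinct vertices $w_i$, while each $w_i$ is incident to at most one size-$2$ color, and a double count yields $2N_2\leq n$. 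Hence $N_2\leq\lfloor n/2\rfloor$ and the number of colors is at least $2n-\lfloor n/2\rfloor=\lceil 3n/2\rceil$.

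For the matching upper bound I would color so that repeated colors occur only on pairs of $u$-edges. When $n$ is even, match $w_1,\dots,w_n$ into $n/2$ pairs, give the two $u$-edges of each pair a common color, and give each $v$-edge its own new color; when $n$ is odd, do this for $w_1,\dots,w_{n-1}$ and give $w_n$ two further new colors. This uses $\lceil 3n/2\rceil$ colors, and in any $K_{2,3}$ the only possible coincidence among its six edge-colors is the equality of the two $u$-edges of a matched pair, which occurs at most once; hence every $K_{2,3}$ receives at least five colors. Combining the two bounds gives $f(K_{2,n},K_{2,3},5)=\lceil 3n/2\rceil$.
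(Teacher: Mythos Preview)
Your argument is correct and follows a genuinely different line from the paper's. For the lower bound, the paper splits into cases according to whether the coloring is proper: in the proper case it shows that each color in $c(u)\cap c(v)$ forces two colors into $(c(u)\cup c(v))\setminus(c(u)\cap c(v))$, and an inclusion--exclusion computation then gives $|c(u)\cup c(v)|\ge\tfrac32 n$; the non-proper case is reduced to this by deleting the vertices carrying a repeated color at $u$ or at $v$. You instead count color-class sizes directly: after disposing of multiplicities $\ge 3$ and of the degenerate $a_i=b_i$ situation, your local claim --- that a repeated edge at $w_k$ forces the companion edge at $w_k$ to be private --- yields $2N_2\le n$ by a one-line double count, uniformly over all colorings. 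This is more streamlined and also sidesteps a delicate point in the paper's reduction, which as written treats only the case of at most one repeated color on each of the $u$-side and the $v$-side; your argument needs no such restriction. The upper-bound constructions differ as well --- you repeat colors only within pairs of $u$-edges, whereas the paper places each shared color on one $u$-edge and one $v$-edge --- but both verify immediately.
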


\begin {proof}
 
  Suppose the edges of $G = K_{2,n}$ are properly colored so that every path of length four receives at least three colors.  Call the vertices in the first part of $G$, $u$ and $v$. Choose a color $a \in \cc(u)\cap \cc(v)$ so that for some vertices $x,y$ in the second part of $G$, $a=\cc(u,x)=\cc(v,y)$. Note that there exist colors $b,c$ so that $b=\cc(u,y)$, $c=\cc(v,x)$, and $b,c \in (\cc(u) \cup \cc(v)) \backslash (\cc(u) \cap \cc(v))$. Since there are two colors for every one in $\cc(u)\cap \cc(v)$, we can say that

 \begin {equation}\label{K2ntwo-int}
   \left|\cc(u) \cap \cc(v) \right| \leq \lfloor \frac {1}{2} \left| (\cc(u) \cup \cc(v)) \backslash (\cc(u) \cap \cc(v)) \right| \rfloor.
 \end {equation}

Applying this inequality to the principle of inclusion-exclusion, we write

\[ \left|\cc(u) \cup \cc(v) \right| = \left| \cc(u) \right| + \left| \cc(v) \right| - \left| \cc(u) \cap \cc(v) \right| \geq 2n - \frac{1}{3} \left| \cc(u) \cup \cc(v) \right.| \]

Solving for the union we get
\begin {equation}\label{ubK2n}
\left|\cc(u) \cup \cc(v) \right| \geq \frac{3}{2} n. 
\end {equation}




For the upper bound, we construct an edge-coloring of $G=K_{2,n}$ with $\lceil\frac{3}{2} n\rceil$ colors.  Label the vertices of the first part of $G$, $u,v$ and the second part $\{v_1,v_2, \dots, v_n\}$.  Let $r = \lceil \frac{n}{2} \rceil$.  Color the edges $(v_1,u), (v_2,u), \dots, (v_r,u)$ by the colors $1, \dots, r$. If $n$ is even, color the edges $(v_n,v), (v_{n-1},v), \dots, (v_{n-r+1},v)$ from the set of colors $\{1, \dots, r\}$. If $n$ is odd, color the edges $(v_n,v), (v_{n-1},v), \dots, (v_{n-r+2},v)$ by some of the colors from the set $\{1, \dots, r\}$. Color the remaining edges distinctly by all the colors not previously used.  Let $i$ and $j$ be such that $\cc(u,v_i) = \cc(v,u_j)$.  Notice that for any $k \in \{1, \dots, n\}$, $\{\cc(u,v_i), \cc(u,v_j), \cc(v,v_i), \cc(u,v_k)\}$ are pairwise distinct. Hence every 4-path receives at least three colors.

\end {proof}
\qed

 \subsection{A small improvement}
 
\begin {theo}

 $f(n,5,9) \geq \frac{7}{4} n - 3$
 
\end {theo}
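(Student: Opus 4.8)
The plan is to use the reduction $f(n,5,9)=f(n)$ noted above and prove $f(n)\ge\tfrac{11}{4}n-\tfrac{23}{4}$ by induction on $n$ (for $n$ above a small threshold), with Lemma~\ref{maintool} driving the inductive step. Fix an optimal coloring $c$ of $K_n$ using $N=f(n)$ colors, in which every four-edge path and every four-edge cycle gets at least three colors. I would first pin down the local structure this forces. A monochromatic four-edge cycle is already forbidden, and (for $n\ge 5$) a monochromatic three-edge path extends to a four-edge path using at most two colors, so every color class of $c$ avoids both; hence each color class is a vertex-disjoint union of stars and triangles. A short case check on four-edge paths through a repeated color sharpens this: if two edges at a vertex $x$ share a color $a$, then no edge at the other endpoints, and nothing continuing past them, may use $a$; in particular a color class that uses two edges at a common vertex is a single star centered there or a triangle through there.

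For the inductive step I would pick a pair $\{u,v\}$ and put $W=V\setminus\{u,v\}$. Restricting $c$ to the bipartite graph between $\{u,v\}$ and $W$ is a $(K_{2,3},5)$-coloring of $K_{2,n-2}$, so Lemma~\ref{maintool} gives a set $B$ of at least $\lceil\tfrac32(n-2)\rceil$ colors used on the edges from $\{u,v\}$ to $W$. Writing $C_W$ for the colors appearing on edges inside $W$, the restriction of $c$ to $W$ is again a valid coloring of $K_{n-2}$, so $|C_W|\ge f(n-2)$, and the colors of $B$ missing from $C_W$ are new, giving $N\ge f(n-2)+|B\setminus C_W|$. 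Since $\tfrac{11}{4}n-\tfrac{23}{4}=\bigl(\tfrac{11}{4}(n-2)-\tfrac{23}{4}\bigr)+\tfrac{11}{2}$ and $6>\tfrac{11}{2}$, it suffices to exhibit a pair $\{u,v\}$ with $|B\setminus C_W|\ge 6$: six colors occurring on some edge from $\{u,v\}$ to $W$ but on no edge inside $W$.

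The heart of the proof, and the step I expect to be the main obstacle, is producing such a pair. A color lying in both $B$ and $C_W$ must, by the structure above, contribute a star or a triangle sitting in one of only a few positions relative to $\{u,v\}$ and $W$ — essentially a star centered in $W$ with $u$ or $v$ as a leaf, a triangle through one of $u,v$ and two vertices of $W$, or a class splitting into a small piece at $\{u,v\}$ and a piece inside $W$. The difficulty is that a color class may be a union of many small stars and triangles, so there is no cheap bound saying classes are small; the admissible reuse must be squeezed out of the finer four-edge-path constraints and a counting over the choices of $\{u,v\}$, arranged so that some pair is forced to leave at least six colors outside $C_W$. Once the recursion $f(n)\ge f(n-2)+6$ is in hand the induction closes with room to spare, and I would finish by checking the finitely many base cases $5\le n\le 8$ directly (using, e.g., $f(5)=9$), all of which comfortably exceed $\tfrac{11}{4}n-\tfrac{23}{4}$.
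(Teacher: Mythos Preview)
Your approach is not the paper's, and the step you yourself flag as ``the main obstacle'' is a real gap, not a routine case check. The recursion you are aiming for, $f(n)\ge f(n-2)+6$, would in fact give $f(n)\ge 3n-O(1)$, strictly stronger than the stated $\tfrac{11}{4}n-\tfrac{23}{4}$; the ``room to spare'' you note is a warning sign. You never establish that some pair $\{u,v\}$ has $|B\setminus C_W|\ge 6$, and the averaging you hint at does not obviously work: in a proper coloring every color class is a matching, possibly covering almost all of $V$, so a typical color on an edge $uw$ will also occur on an edge inside $W$. Your structural observations about color classes being unions of stars and triangles are correct but do not force enough colors to be confined to the edges at $\{u,v\}$.

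The paper does not induct. After the easy improper case (which already gives $s\ge 3n-7$), it fixes a proper $(5,9)$-coloring with $s$ colors and, by pigeonhole, finds a color $a$ on at least $\binom{n}{2}/s$ edges; let $A$ be the set of endpoints of $a$-edges, so $|A|\ge n(n-1)/s$. Choosing $u,v\in A$ with $c(u,v)=a$, the four-edge-path constraint forces the colors on edges from $u$ into $A$, the colors on edges from $v$ into $A$, and the colors on the bipartite graph between $\{u,v\}$ and $V\setminus A$ to be pairwise disjoint apart from $c(u,v)$. Applying Lemma~\ref{maintool} to that $K_{2,\,n-|A|}$ gives a single inequality relating $s$ and $n$, and solving it yields the bound. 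The decisive idea you are missing is to pick $\{u,v\}$ as the ends of an edge of a \emph{popular} color and to partition the count according to $A$ versus $V\setminus A$, rather than trying to separate $B$ from $C_W$.
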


\begin {proof}
 
 Consider a $(5,9)$ edge-coloring of $G=K_n$ using $s$ colors. Using the argument of M. Axenovich \cite{Axe}, we first assume that the coloring is not proper, so there exist incident edges $(v_1,v_2)$ and $(v_1,v_3)$ of the same color.  For the coloring to remain $(5,9)$, all edges of $G\backslash\{(v_1,v_2),(v_1,v_3)\}$ incident to $\{v_1,v_2,v_3\}$ must be of different colors and not $\cc(v_1,v_2)$ or $\cc(v_2,v_3)$.  Therefore, $s \geq 3n-7 \geq \frac{7}{4} n - 3$ for $n \geq 5$.
 
 Next we assume the coloring is proper.  By the pigeonhole principle there exists a color, call it $a$, used on at least ${n \choose 2} \slash s$ edges.  Let $A$ be the set of vertices adjacent to edges colored $a$ and choose vertices $u,v \in A$ so that $c(u,v)=a$.
 
 We say that an edge is $in$ $A$ if both vertices adjacent to that edge are in $A$.  Notice that the number of colors on the edges in $A$ adjacent to $u$ $\geq 2 {n \choose 2} \slash s - 1$, the same for $v$, and $c(u,v)$ is counted both times.  Let $H$ be the complete bipartite graph with vertices $\{u,v\}$ in the first part and the vertices of $G \backslash A$ in the second part.  Let the edge coloring of $H$ be induced by the edge coloring of $G$.  For any $x \in A$ and $y \in G$, $\cc(u,x) \neq \cc(v,y)$, else we produce a two-colored four-edge path.  The same reasoning holds for $y \in A$ and $x \in G$.  This implies that the colors on the edges of $H$ are distinct from the colors previously counted.  Hence we apply Lemma \ref{maintool} to $H$ to obtain
 
 \begin{equation}\label{count}
  s \geq 2 \frac{{n \choose 2}}{s} - 1 + 2 \frac{{n \choose 2}}{s} - 1 - 1 + \frac{3}{2} (n - 2 \frac{{n \choose 2}}{s}).
  \end{equation}
 
 Solving for s we obtain the result.

\end {proof}
\qed

 \section{Almost rainbow four-cliques}
 
 We obtain a marginal improvement for the lower bound on $f(n,4,5)$ and extend the even case of the upper bound from \cite{ErdGyar} to all complete graphs with orders not congruent to one modulo three.
 
 \begin{theo}\label{4,5}
 \mbox{}
\begin{enumerate}
	\item $\frac{5}{6}(n-1)+1 \leq f(n,4,5)$
	\item $f(n,4,5)\leq n-1$ for even $n \not \equiv 1 \pmod 3$
 \end{enumerate}

 \end{theo}
 
 \begin{proof}
 
 Given a $(4,5)$ coloring of the edges of $G=K_n$, for a fixed vertex $u$, let $P_u$ denote the set of edges incident to $u$, whose colors are repeated on other edges incident to $u$.  Let $S_u$ denote the set of edges with non-repeated colors, incident to $u$.  Let $T_u$ denote the set of edges incident to edges from $P_u$ of the same color.
 
 \begin{center}

\begin{tikzpicture}[>=stealth]

\filldraw[fill=gray!20!white] (0,1) ellipse (40pt and 20pt);

\draw [line width=1.5, style=dashed, color=black] (0,0) -- (1,1);
\draw [line width=1.5, style=dashed, color=black] (0,0) -- (-1,1);
\draw [line width=1.5, color=red] (0,0) -- (1,-1);
\draw [line width=1.5, color=red] (0,0) -- (.5,-1);
\draw [line width=1.5, color=black] (0,0) -- (-1,-1);
\draw [line width=1.5, color=black] (0,0) -- (-.5,-1);
\draw [line width=1.5, color=blue] (1,-1) -- (.5,-1);
\draw [line width=1.5, color=blue] (-1,-1) -- (-.5,-1);

\filldraw
 (0,0) circle (2pt)
 (.5,-1) circle (2pt)
 (-.5,-1) circle (2pt)
 (1,1) circle (2pt)
 (-1,1) circle (2pt)
 (1,-1) circle (2pt)
 (-1,-1) circle (2pt);
 
\node at (.3,0)
{$u$};

\node at (0,.75)
{$S_u$};

\node at (-1,-.5)
{$P_u$};

\node at (0,-1.5)
{$T_u$};

\draw [->] (.25,-1.5) arc (270:360:10pt);
\draw [->] (-.25,-1.5) arc (270:180:10pt);

\end{tikzpicture}

\end{center}
 
 Notice that 
 
\begin{enumerate}
 \item $\cc(P_u)\cap \cc(S_u)=\emptyset$ by definition  
 \item $\cc(P_u)\cap \cc(T_u)=\emptyset$ else we obtain an induced four-colored $K_4$ on the edges $p\in P_u$ and $t\in T_u$ that share the same same color and the edges $p_1,p_2 \in P_u$ that share the same color and are incident to $t$ ($p$ may be equal to $p_1$, depending on the coloring).
 \item $\cc(S_u)\cap \cc(T_u)=\emptyset$ else we obtain an induced four colored $K_4$ on the the edge $s\in S_u$ and $t\in T_u$ of the same color and the two edges of $P_u$ with the same color, which are incident to $t$
 \item For any vertex $v$ distinct from $u$, if $(u,v)\in P_u$ so that $\cc(u,v)=\cc(u,w)$ for some $w$, then $(u,v)\notin P_v$ and $(v,w)\notin P_v$
 \item For any vertex $v$ distinct from $u$, $T_u \cap T_v = \emptyset$
\end{enumerate}

Notice that \[2\sum_{u\in V(G)}{|T_u|} = \sum_{u\in V(G)}{|P_u|}\]

so that \[\sum_u{|T_u|}+\sum_u{|P_u|}=3\sum_u{|T_u|}=3\frac{1}{n}\sum_u{|T_u|}\times n \leq {n\choose 2}\]

by the above claim $5$, and we obtain \[\frac{1}{n}\sum_u{|T_u|}\leq \frac{n-1}{6}.\]
 
By the pigeonhole principle, choose a vertex $u$ so that $|T_u|\leq \frac{n-1}{6}$. Notice that $n-1=\deg{u}=|S_u|+|P_u| \leq |S_u| + \frac{n-1}{3}$, so that \[|S_u| \geq \frac{2}{3}(n-1)\]
 
Summing up the colors of edges incident to $u$ we get \[|\cc(u)| = |S_u| + \frac{1}{2}|P_u| \geq \frac{2}{3}(n-1) + \frac{1}{6}(n-1) = \frac{5}{6}(n-1).\]

However, $\cc(T_u)$ must be nonempty and distinct from the colors counted above, hence

\[|\cc(u)|\geq \frac{5}{6}(n-1)+1.\]

\vspace{.2 in}

For the upper bound we color the edges of $K_n$ by a classical proper coloring (see \cite{Wilson} for example) and show that such a coloring is $(4,5)$.
 
 For odd $n$, we n-color the edges of $K_n$ by drawing the vertices in the form of a regular n-gon and coloring the consecutive edges around the boundary in order with colors $1$ to $n$. Next we color every edge parallel to a boundary edge by the same color as that boundary edge. Call the resulting labeled graph $G_n$. Notice that every $K_4 \subseteq G_n$ with a pair of parallel edges is a non-rectangular trapezoid. Hence the coloring is $(4,5)$.
 
 For even $n$, choose a $K_{n-1}$ subgraph and color it as above, obtaining $G_{n-1}$. Next construct the graph $w \times G_{n-1}$, joining the above graph to a vertex $w$. Since for any vertex $u$ of $G_{n-1}$, there are only $n-2$ incident edges, some color is missing. Apply this color to the edge $(u,w)$ and continue likewise for all vertices of $G_{n-1}$. Call the resulting labeled graph $G^*_n$.
 
 For vertices $x, y, z \in G^*_n$ with so that $(x,y)$ and $(y,z)$ are boundary edges, we say that $y$ is \emph{opposite} an edge $e$ if the line bisecting angle $uvw$ is the perpendicular bisector of $e$. Notice that the edges opposite to $y$ share the same color, which is not used on any edge incident to $y$. By the above observation, $G_{n-1} \subseteq G^*_n$ is $(4,5)$-colored, hence it is enough to show that for $w$ as chosen above in the definition of $G^*_n$ and any other distinct vertices $x,y,z$ of $G^*_n$, the induced subgraph receives at most one repeated color. 
 Choose any vertex $v\in G^*_n$. For $i = 1, \dots, n-2$ label the vertices with counterclockwise distance $i$ from $v$, $u_i$, where arithmetic of label indices is performed modulo $n-1$. 
 Notice that the only edges that share the color $\cc(w,v)$ are $(u_1,u_{-1}), (u_2,u_{-2}), \dots, (u_{n-2},u_{-(n-2)})$. For $i = 1, \dots, \frac{n-2}{2}$, if $\cc(u_i,w)=\cc(u_{-i},v)$, then for any edge $e$ opposite $u_i$, $\cc(e)=\cc(u_{-i},v)$. However, this means that 
 
 \[\cc(u_{i-1},u_{i+1})=\cc(e)=\cc(u_{-i},v) \Leftrightarrow vu_{2k}=vu_{-k} \Leftrightarrow\]
 \[3k\equiv0 \pmod{(n-1)} \Leftrightarrow n \equiv 1 \pmod 3.\]




 \qed \end{proof}

  \section{Almost-Rainbow Four-Cycles}
 We show the improved upper bound for the bipartite problem, when the two parts of $G$ are of equal size.
 
 \begin {theo}
 
 \[f(K_{n,n},G_4,3) \leq n, \mbox{ for all }n\geq 3\]

 \end {theo}

 \subsection {The Coloring}
\vspace{.1 in}

 We will explore the matrix
 
 $$G=\begin{pmatrix}
1           &2        &3         &. &r         &. &c+1        &.     &n  \cr
3           &1        &2         &. &r-1       &. &c          &.     &n-1 \cr
v_{3}     &n-1      &1         &. &r-2       &. &c-1        &.     &n-2 \cr
.           &.        &.         &. &.         &. &.          &.     &.   \cr
v_{n+1-r} &r+1      &r+2       &. &1         &. &r+c        &.     &r    \cr
.           &.        &.         &. &.         &. &.          &.     &.    \cr
v_{n-1}   &3        &4         &. &r+1       &. &c+2        &.     &2    \cr
n-2         &u_{2}  &u_{3}   &. &u_{r} &. &u_{c+1}  &.     &1
\end{pmatrix}$$

The values of $v_i$ and $u_i$ will be defined shortly.

\vspace{.1 in}
 
Let permutation $\sigma$ be the $n-1$ cycle $(1\; 2\; \ldots\;  n-1)$. That is, $\sigma$ sends $i$ to $i+1 \pmod{n-1}$. For a natural number $m$ we shall write $m \pmod {(n-1)}$ for its representative in $\{1,2,\ldots ,n-1\}$. For each $r$ we defined $\sigma^{(r)}$ by the rule $\sigma^{(r)}{(c)} \equiv r+c \pmod{(n-1)}$.  Let us start with the matrix

$$C=\begin{pmatrix}
2                   &3                      &. &c+1                 &.       &n                   \cr
\sigma^{0}{(1)}   &\sigma^{0}{(2)}      &. &\sigma^{0}{(c)}   &.       &\sigma^{0}{(n-1)} \cr
\sigma^{n-2}{(1)} &\sigma^{n-2}{(2)}    &. &\sigma^{n-2}{(c)} &.       &\sigma^{n-2}{(n-1)}\cr
.                   &.                      &. &.                   &.       &. \cr
\sigma^{r}{(1)}   &\sigma^{r}{(2)}      &. &\sigma^{r}{(c)}   &.       &\sigma^{r}{(n-1)}\cr
.                   &.                      &. &.                   &.       &.  \cr
\sigma^{2}{(1)}   &\sigma^{2}{(2)}      &.   &\sigma^{2}{(c)} &.      &\sigma^{2}{(n-1)}
\end{pmatrix}$$

\vspace{.1 in}

We define matrix $G$ by adding the first column $V=\{v_1,\dots,v_{n-1},vu\}$ and the last row $U=\{vu,u_2,\dots,u_n\}$ to the matrix $C$.

$$G=\begin{pmatrix}
v_{1}       &2                   &3                      &. &c+1                 &.       &n                   \cr
v_{2}       &\sigma^{0}{(1)}     &\sigma^{0}{(2)}        &. &\sigma^{0}{(c)}     &.       &\sigma^{0}{(n-1)} \cr
v_{3}       &\sigma^{n-2}{(1)}   &\sigma^{n-2}{(2)}      &. &\sigma^{n-2}{(c)}   &.       &\sigma^{n-2}{(n-1)}\cr
.             &.                   &.                      &. &.                   &.       &. \cr
v_{n+1-r}   &\sigma^{r}{(1)}     &\sigma^{r}{(2)}        &. &\sigma^{r}{(c)}     &.       &\sigma^{r}{(n-1)}\cr
.             &.                   &.                      &. &.                   &.       &.  \cr
v_{n-1}     &\sigma^{2}{(1)}    &\sigma^{2}{(2)}         &. &\sigma^{2}{(c)}     &.       &\sigma^{2}{(n-1)}\cr
vu            &u_{2}                &u_{3}                  &. &u_{c+1}           &.       &u_{n}
\end{pmatrix}$$

\vspace{.1 in}

The entries of $G$ will be defined as follows:
for every 4-tuple $(i,j;l,m)$ with $1 \leq i<j \leq n$ and $1\leq l<m \leq n$ the  ($2\times 2$) matrix

$$G(i,j;l,m)=\begin{pmatrix}
                    a_{il}&a_{im}\cr
                    a_{jl}&a_{jm}\end{pmatrix}$$

\vspace{.1 in}
 
 We consider the colorings for the edges $V$ and $U$ in three types of even $n\pmod 6$.

\vspace{.1 in}
 
 \noindent \emph{\textbf{Type 1:} Matrix $G_1=G$ for $n \equiv 2 \pmod 6$};    $ \;  \; \left[ \;  n=2+6k, \; k\geq 1 \; \right]$

   \[ a_{i,1} = \left\{\begin{array}{ll}
    1, & i=1\\
    3, & i=2\\
    n, & 3\leq i \leq \frac{n}{2} +1\\
    2(i-1)-n, & \frac{n}{2}+2\leq i\leq n-1\\ 
    n-2, & i=n  
    \end{array}\right. \]

    \[ a_{n,l} = \left\{\begin{array}{ll}
    n-2l, & 1\leq l \leq \frac{n}{2}-1\\
    n, & \frac{n}{2} \leq l \leq n-2\\
    n-1, & l=n-1\\
    1, & l=n    
    \end{array}\right. \]
 
     \vspace{.1 in}
     
\noindent \emph{\textbf{Type 2:} Matrix $G_2=G$ for $n \equiv 6 \pmod 6$};  $ \; \; \left[ \; n=6+6k, \; k\geq 1 \; \right]$

 \noindent  We define $Y$ as $\frac{n}{2}-2$ for even $k$,  and as $\frac{n}{2}+1$ for odd $k$.
   
   \[ a_{i,1} = \left\{\begin{array}{ll}
    1, & i=1\\
    3, & i=2\\
    n, & 3\leq i \leq \frac{n}{2} +1\\
    Y, & i=\frac{n}{2}+2\\
    2(i-2)-n, & \frac{n}{2}+3\leq i\leq n-1\\
    n-2, & i=n   
    \end{array}\right. \]

   \[ a_{n,l} = \left\{\begin{array}{ll}
    n-2, & l=1\\
    n-2(l+1), & 2\leq l \leq \frac{n}{2}-2\\
    Y, & l= \frac{n}{2}-1\\
    n, & \frac{n}{2} \leq l \leq n-2\\
    n-1, & l=n-1\\
    1, & l=n    
    \end{array}\right. \]
 \vspace{.1 in}
 
 \noindent  Exception for $n=6$;  $\left[ k= 0 \right]$ the first row $V=\{1,5,6,6,4,\}$, the last column $U=\{3,6,6,6,5,1\}$.   
    
   \vspace{.1 in}

 \noindent \emph{\textbf{Type 3:} Matrix $G_3=G$ for $n \equiv 4 \pmod 6$};  $ \; \; \left[ \; n=4+6k, \; k\geq 4 \; \right]$

  \noindent  The regularity  starts with $n>22$.  
  
      \[ a_{i,1} = \left\{\begin{array}{ll}
    1, & i=1\\
    3, & i=2\\
    n, & 3\leq i \leq \frac{n}{2} +1\\
    n-9, & i=\frac{n}{2}+2\\
    2(i-2)-n, & \frac{n}{2}+3\leq i\leq \frac{5n+4}{6}\\
    2(i-1)-n, & \frac{5n+10}{6}\leq i\leq n-1\\
    n-2, & i=n   
    \end{array}\right. \]

     \[ a_{n,l} = \left\{\begin{array}{ll}
    n-2l, & 1\leq l \leq \frac{n-4}{6}\\
    n-2(l+1), & \frac{n+2}{6}\leq l \leq \frac{n}{2}-2\\
    n-9, & l=\frac{n}{2}-1\\
    n, & \frac{n}{2} \leq l \leq n-2\\
    n-1, & l=n-1\\
    1, & l=n    
    \end{array}\right. \]

 \vspace{.1 in}
 
 \noindent  Exceptions:

 For $n=10$ we replace $(n-9)$ with $(n-8)$.

 For $n=16$ we replace $(n-9)$ with $(n-11)$.

 For $n=22$ we replace 
 $(n-9)$ with $(n-5)$ and the definitions:
 
  \[ a_{i,1} = \left\{\begin{array}{ll}
    2(i-2)-n, & \frac{n}{2}+3\leq i\leq \frac{5n-2}{6}\\
    2(i-1)-n, & \frac{5n+4}{6}\leq i\leq n-1
    \end{array}\right. \]

    \[a_{n,l} =  \left\{\begin{array}{ll}   
     n-2l, & 1\leq l \leq \frac{n-10}{6}\\
     n-2(l+1), & \frac{n-4}{6}\leq l \leq \frac{n}{2}-2
    \end{array}\right. \]

 \vspace{.1 in}
 
 \subsection{Sketch of Proof}
 
 \begin{proof}
 
  First, we show that every $4$-cycle defined in the \emph{basic coloring} (matrix entries $a_{ij}$ where $1< i \leq n, 1\leq j < n$) is $almost$ $rainbow$.  That is, given $i<j$ and $l<m$ we show that $a_{i,l},a_{j,l},a_{i,m},a_{j,m}$ contains at least three distinct elements in the \emph{basic coloring}.
  
\vspace{.1 in}

\noindent \emph{\textbf{Step 1}}:

We start with the matrix $C$ and look at two occurrences, which are identical for each of the types of even $n\pmod 6$ specified above.
\vspace{.1 in}

\emph{\textbf{Case 1}}:
We take the submatrix of $G(i,j;l,m)$ with $2 \leq l < m \leq n$, $2 \leq i <j < n$, and let $s=(n+1)-i$, $t=(n+1)-j$. 
A typical ($2\times 2$) submatrix in this case has the form:

$$\begin{pmatrix}
                    \sigma^{s}{(l-1)}& \sigma^{s}{(m-1)}\cr
                    \sigma^{t}{(l-1)}& \sigma^{t}{(m-1)}\end{pmatrix}$$

We wish to show there are three distinct elements:
$\sigma^{s}{(l-1)}\neq \sigma^{t}{(l-1)}$,
$\sigma^{t}{(l-1)}\neq \sigma^{t}{(m-1)}$, 
$\sigma^{s}{(l-1)}\neq \sigma^{t}{(m-1)}$.
Suppose $\sigma^{(s)}{(l-1)} \equiv \sigma^{(t)}{(l-1)}  \Rightarrow s\equiv t$ $\pmod {n-1}$, which is a contradiction.
Suppose $\sigma^{(t)}{(l-1)} \equiv \sigma^{(t)}{(m-1)} \Rightarrow l\equiv m$ $\pmod {n-1}$, which is a contradiction.
Suppose $\sigma^{(s)}{(l-1)} \equiv \sigma^{(t)}{(m-1)} \Rightarrow s+l\equiv t+m$ $\pmod {n-1}$, and assume there are three distinct elements:
$\sigma^{t}{(l-1)}\neq \sigma^{t}{(m-1)}$,
$\sigma^{s}{(m-1)}\neq \sigma^{t}{(m-1)}$, 
$\sigma^{s}{(m-1)}\neq \sigma^{t}{(l-1)}$. 
Follow the argument above the first two inequalities are correct.
Suppose $\sigma^{s}{(m-1)} \equiv \sigma^{t}{(l-1)}\Rightarrow s+m\equiv t+l$ $\pmod {n-1}$.
Subtracting equations $s+l \equiv t+m$ and  $s+m\equiv t+l\Rightarrow l \equiv m$ $\pmod {n-1}$, which is a contradiction.
One of the following two sets has three distinct elements:
$\{ \sigma^{s}{(l-1)},\;   \sigma^{t}{(l-1)},\;   \sigma^{t}{(m-1)} \}$ or $\{ \sigma^{t}{(l-1)},\;   \sigma^{t}{(m-1)},\;   \sigma^{s}{(m-1)} \}$. 
  
\vspace{.1 in}

\emph{\textbf{Case 2}}:
We take the submatrix of $G(i,j;l,m)$ with $2 \leq l< m \leq n$, $i=1$, $1<j<n$, and let $r=(n+1)-j$. 
A typical ($2\times 2$) submatrix has the form:

$$\begin{pmatrix}
                    l&m\cr
                    \sigma^{r}{(l-1)}& \sigma^{r}{(m-1)}\end{pmatrix}$$

We wish to show there are three distinct elements: 
$l \neq m$,
$m \neq \sigma^{r}{(m-1})$, 
$l \neq \sigma^{r}{(m-1)}$.  
Suppose $m \equiv \sigma^{(r)}{(m-1)}\Rightarrow r \equiv 1$ $\pmod {n-1}$, which is a contradiction.
Suppose $l \equiv \sigma^{(r)}{(m-1)}\Rightarrow l \equiv r+m-1$ $\pmod {n-1}$, and assume there are three distinct elements: 
$\sigma^{r}{(l-1)}\neq \sigma^{r}{(m-1)}$,
$m\neq \sigma^{r}{(m-1)}$,
$m\neq \sigma^{r}{(l-1)}$.  
The first two inequalities are correct. Suppose $m \equiv \sigma^{r}{(l-1)}\Rightarrow m \equiv r+l-1$ $\pmod {n-1}$. 
Subtracting equations $l \equiv r+m-1$ and  $m \equiv r+l-1\Rightarrow r=1$ $\pmod {n-1}$, which is a contradiction.
One of the following two sets has three distinct elements:
$\{l,\;  m,\;  \sigma^{r}{(m-1)} \}$ or $\{ \sigma^{r}{(l-1)},\; \sigma^{r}{(m-1)},\;  m \}$. 
 
\vspace{.1 in}
 
\noindent \emph{\textbf{Step 2}}:

For matrix $G(i,j;l,m)$ with  $i=1$, $j=n$ and $2 \leq l <m\leq n$ we look at five cases and consider every matrix type defined above of even $n\pmod 6$.

\vspace{.1 in}

\emph{\textbf{Case 1}}: We take $G(i,j;l,m)$ with   $2\leq l<m \leq \frac{n}{2}-1$, $i=1$, $j=n$. 

\vspace{.1in}
\noindent \textit{Consider  $G_1$}.

  $$\begin{pmatrix}
       l&m\cr
       n-2l&n-2m\end{pmatrix}$$
                   
We wish to show there are three distinct entries:
$l\neq n-2l$,
$n-2l\neq n-2m$, 
$l\neq n-2m$. 
Suppose $l =n-2l\Rightarrow 3l=n$ and since $n=2+6k$  this is a contradiction.
Suppose $n-2l=n-2m \Rightarrow l=m$, which is a contradiction.
Suppose $l=n-2m$ and we wish to show there are three distinct elements:
$l\neq m$,
$l\neq n-2l$,
$m\neq n-2l$.
As shown above the first inequality is correct.
Suppose $m=n-2l$ and since $l=n-2m \Rightarrow l=m$, which is a contradiction.
One of the following two sets has three distinct elements: $\{l, n-2l, n-2m\}$ or $\{l, m, n-2l\}$. 

\vspace{.1in}

\noindent \textit{Consider $G_2$}

{\bf 1.}  $G_2$ with  $2 \leq l< m \leq \frac{n}{2}-2$, $i=1$, $j=n$.

$$\begin{pmatrix}
                   l&m\cr
                   n-2(l+1)&n-2(m+1)\end{pmatrix}$$

We wish to show there are three distinct entries:
$l\neq n-2(m+1)$,
$l\neq n-2(l+1)$, 
$n-2(l+1)\neq n-2(m+1)$.
Suppose $l =n-2l-2\Rightarrow 3l=n-2$ and since $n=6+6k$  this is a contradiction.
Suppose $n-2l=n-2m \Rightarrow l=m$, which is a contradiction.
Suppose $l=n-2(m+1)$ and we wish to show there are three distinct elements:
$l\neq m$,
$l\neq n-2(l+1)$,
$m\neq n-2(l+1)$.
As shown above the first inequality is correct.
Suppose $m=n-2(l+1)$ and since $l=n-2(m+1) \Rightarrow l=m$, which is a contradiction.
One of the following two sets has three distinct elements: $\{l, n-2(m+1), n-2(l+1)\}$ or $\{l, m, n-2(l+1)\}$. 

{\bf 2.} $G_2$ with $ 2 \leq l \leq \frac{n}{2}-2$, $m=\frac{n}{2}-1$, $i=1$, $j=n$.

$$\begin{pmatrix}
                   l&m\cr
                   n-2(l+1)&Y\end{pmatrix}$$

\noindent If  \textit{$K$ is Even}  $\Rightarrow m=\frac{n}{2}-1$, $Y=\frac{n}{2}-2 \Rightarrow Y=m-1$.

We wish to show there are three distinct entries:
$l\neq m$,
$m\neq m-1$, 
$l\neq m-1$.
Assume $l= m-1$ and we wish to show there are three distinct entries:
$m\neq m-1$,
$m\neq n-2(l+1)$,
$m-1\neq n-2(l+1)$.
Suppose $m= n-2(l+1)$ and since $l=m-1$ and $m=\frac{n}{2}-1 \Rightarrow  n=6$, which is a contradiction.
Suppose $m-1= n-2(l+1)$ and since $m=\frac{n}{2}-1$ and $l= m-1 \Rightarrow  n=8$, which is a contradiction.
One of the following two sets has three distinct elements: $\{l, m, Y\}$ or $\{m, Y, n-2(l+1)\}$. 
\vspace{.1in}

\noindent If  \textit{$K$ is Odd} $ \Rightarrow m=\frac{n}{2}-1$, $Y=\frac{n}{2}+1 \Rightarrow  Y=m+2$.
Three distinct elements are $\{l, m, Y\}$. 

\vspace{.1in}

\noindent \textit{Consider $G_3$}

{\bf 1.}  $G_3$ with $i=1$, $j=n$ and ($2 \leq l<m \leq \frac{n-4}{6}$ or $\frac{n+2}{6} \leq l<m \leq \frac{n}{2}-2$). The argument is similar to above one with $G_1$. One of the following two sets has three distinct elements: $\{l, n-2l, n-2m \}$ or $\{ l, m, n-2l \}$. 

{\bf2.} $G_3$ with   $2 \leq l \leq \frac{n-4}{6}$, $\frac{n+2}{6} \leq m \leq \frac{n}{2}-2$, $i=1$, $j=n$.

$$\begin{pmatrix}
                   l&m\cr
                   n-2l&n-2(m+1)\end{pmatrix}$$

We wish to show there are three distinct entries:
$l\neq n-2l$,
$n-2l\neq n-2(m+1)$, 
$l\neq n-2(m+1)$.
Suppose $l =n-2l\Rightarrow 3l=n$ and since $n=4+6k$ this is a contradiction.
Suppose $n-2l= n-2(m+1)\Rightarrow l=m+1$, which is a contradiction.
Suppose $l=n-2(m+1)$ and we wish to show there are three distinct elements:
$l \neq m$,
$l\neq n-2l$,
$m \neq n-2l$.
The first two inequalities are correct.
Suppose $m = n-2l$ and since  $l=n-2(m+1)  \Rightarrow m=l-2$, which is a contradiction.
One of the following two sets has three distinct elements: $\{l, n-2l, n-2(m+1))\}$ or $\{ l, m, n-2l \}$. 

{\bf3.}  $G_3$ with $2 \leq l \leq \frac{n-4}{6}$, $m=\frac{n}{2}-1$, $i=1$, $j=n$.

 $$\begin{pmatrix}
                   l&m\cr
                   n-2l&n-9\end{pmatrix}$$

We wish to show  there are three distinct entries:
$l\neq m$,
$m\neq n-9$,
$l\neq n-9$.
Suppose $l= n-9$ and since  $l <\frac{n-4}{6}\Rightarrow n-9<\frac{n-4}{6}\Rightarrow n<10$, which is a contradiction.
Suppose $m =n-9\Rightarrow \frac{n}{2}-1=n-9\Rightarrow n=16$, which is a contradiction.
There are three distinct elements $\{l, m, n-9\}$. 

{\bf  4.}  $G_3$ with  $\frac{n+2}{6}\leq l \leq \frac{n}{2}-2$, $m=\frac{n}{2}-1$, $i=1$, $j=n$.

 $$\begin{pmatrix}
                   l&m\cr
                   n-2(l+1)&n-9\end{pmatrix}$$

We wish to show  there are three distinct entries:
$l\neq m$,
$m\neq n-9$,
$l\neq n-9$.
Suppose $l= n-9$ and since  $l <\frac{n}{2}-2\Rightarrow n-9<\frac{n}{2}-2\Rightarrow n<14$, which is a contradiction.
Suppose $m =n-9\Rightarrow \frac{n}{2}-1=n-9\Rightarrow n=16$, which is a contradiction.
There are three distinct elements $\{l, m, n-9\}$. 

\vspace{.1in}

\emph{\textbf{Case 2}}:  For the submatrix $G(i,j;l,m)$ with $i=1$, $j=n$ and ($\frac{n}{2} \leq l < m \leq n-2$ or $2\leq l \leq \frac{n}{2}-1$, $\frac{n}{2} \leq  m \leq n-2$) three distinct elements are $\{l, m, n\}$.

\vspace{.1in}

\emph{\textbf{Case 3}}: We take the submatrix $G(i,j;l,m)$ with  $2\leq l \leq \frac{n}{2}-1$, $ m = n-1$, $i=1$, $j=n$.
\vspace{.1in}

\noindent \textit{Consider $G_1$}

$$\begin{pmatrix}
                   l&m\cr
                   n-2l&n-1\end{pmatrix}$$

We wish to show there are three distinct entries:
$l \neq n-1$,
$n-2l \neq n-1$,
$l \neq n-2l$.
Suppose  $n-2l = n-1\Rightarrow l=\frac{1}{2}$, which is a contradiction.
Suppose  $l=n-2l\Rightarrow 3l=n$ and since $n=2+6k$ this is a contradiction.
There are three distinct elements $\{l, n-1, n-2l\}$.
\vspace{.1in}

\noindent \textit{Consider $G_2$}

{\bf 1.} $G_2$ with $2\leq l \leq \frac{n}{2}-2$, $m =n-1$, $i=1$, $j=n$.

$$\begin{pmatrix}
                   l&m\cr
                   n-2(l+1)&n-1\end{pmatrix}$$

We wish to show there are three distinct entries:                            
$l \neq n-1$,
$n-2(l+1) \neq n-1$,
$l \neq n-2(l+1)$.
Suppose  $n-2(l+1) = n-1 \Rightarrow l=-\frac{1}{2}$, which is a contradiction.
Suppose $l=n-2(l+1)\Rightarrow 3l=n-2$ and since $n=6+6k$   this is a contradiction.
There are three distinct elements $\{l, n-1, n-2(l+1)\}$.

{\bf 2.}  $G_2$ with  $l =\frac{n}{2}-1$, $m=n-1$, $i=1$, $j=n$.

$$\begin{pmatrix}
                   l&m\cr
                   Y&n-1\end{pmatrix}$$

If  \textit{$K$ is Even} $\Rightarrow Y=\frac{n}{2}-2$.                         
If  $n-1 = Y\Rightarrow n-1=\frac{n}{2}-2\Rightarrow n=-2$, which is a contradiction. Three distinct entries are $\{l,Y,n-1\}$.

If  \textit{$K$ is Odd} $\Rightarrow Y=\frac{n}{2}+1$, and three distinct entries are $\{l,Y,n-1\}$.

\vspace{.1in}

\noindent \textit{Consider $G_3$}

{\bf 1.} For $G_3$ with $l \leq \frac{n-4}{6}$, $ m = n-1$, $i=1$, $j=n$ three distinct elements are $\{l, n-1, n-2l\}$ (similar to $G_1$.)

{\bf 2.} $G_3$ with $\frac{n+2}{6} \leq l < n-1$, $m = n-1$, $i=1$, $j=n$.

$$\begin{pmatrix}
                   l&m\cr
                   n-2(l+1)&n-1\end{pmatrix}$$

We wish to show there are three distinct entries:        
$l \neq n-1$,
$n-2(l+1)\neq n-1$,
$l\neq  n-2(l+1)$.
Suppose  $n-2(l+1)= n-1\Rightarrow l=-\frac{1}{2}$, which is a contradiction.
Suppose $l= n-2(l+1)\Rightarrow 3l=n-2$ and since $n=4+6k$ this is a contradiction.
Three distinct elements are $\{l, n-1, n-2(l+1)\}$. 

{\bf 3.} For $G_3$ with $l=\frac{n}{2}-1$, $m=n-1$, $i=1$, $j=n$ three distinct entries are $\{l, n-9, n-1)\}$.

\vspace{.1in}

\emph{\textbf{Case 4}}: For the submatrix $G(i,j;l,m)$ with   $\frac{n}{2} \leq l \leq  n-2$, $ m=n-1$, $i=1$, $j=n$
three distinct elements are $\{l,n,n-1\}$.

\vspace{.1in}

\emph{\textbf{Case 5}}: For the submatrix $G(i,j;l,m)$ with  $l = n-1$, $ m=n$, $i=1$, $j=n$ three distinct elements are $\{n-1,n,1\}$.

 \qed\end{proof}
  
The argument for other steps is similar. To see the details please view the appendix to this article on ArXiv at http://arxiv.org/ or contact the first author.

 \bibliographystyle{plain}

 \end{document}